\documentclass[11pt]{article}

\usepackage[a4paper,margin=1.15in]{geometry}
\usepackage{amsmath,amssymb,amsthm,mathtools}
\usepackage{mathrsfs}
\usepackage{bm}
\usepackage{microtype}
\usepackage{hyperref}

\hypersetup{
    colorlinks=true,
    linkcolor=blue,
    citecolor=blue,
    urlcolor=blue
}

\newcommand{\R}{\mathbb{R}}
\newcommand{\Sph}{\mathbb{S}}
\newcommand{\E}{\mathbb{E}}
\newcommand{\Pp}{\mathbb{P}}

\newcommand{\Tr}{\operatorname{Tr}}
\newcommand{\Id}{\operatorname{Id}}
\newcommand{\Cov}{\operatorname{Cov}}
\newcommand{\codim}{\operatorname{codim}}
\newcommand{\ip}[2]{\left\langle #1,#2\right\rangle}
\newcommand{\norm}[1]{\left\|#1\right\|}

\theoremstyle{plain}
\newtheorem{theorem}{Theorem}[section]
\newtheorem{proposition}[theorem]{Proposition}

\theoremstyle{remark}
\newtheorem{remark}[theorem]{Remark}

\title{\bf Geometric Bounds for the Mean Gauge\\
and the Mean Width in Isotropic Position}

\author{
  Silouanos Brazitikos\thanks{University of Crete, Greece. Email: \texttt{silouanb@uoc.gr}}
  \and
Christos Pandis \thanks{University of Crete, Greece. Email: \texttt{chrpandis@gmail.com}
}}
\date{}

\begin{document}
\maketitle

\begin{abstract}
Let $K\subset\R^n$ be an origin-symmetric convex body and assume that
its uniform probability measure is isotropic in the probabilistic
normalization, namely
\[
  \int_K x\otimes x\,d\mu_K(x)=\Id_n.
\]
We give deterministic geometric proofs of
\[
 M(K)\leq C\frac{\log(n)}{\sqrt n}
 \qquad\text{and}\qquad
 M^*(K)\leq C\sqrt n\,\log(n),
\]
where
\[
 M(K)=\int_{\Sph^{n-1}}\norm{\theta}_K\,d\sigma(\theta),
 \qquad
 M^*(K)=\int_{\Sph^{n-1}}h_K(\theta)\,d\sigma(\theta).
\]
Combining both estimates yield
\[
M(K)M^{\ast}(K)\leq  C\log^2n.
\]

The first proof uses a quadratic aggregate of dyadic centroid bodies.
The second uses the analogous weighted aggregate of the Laplace bodies
$p\{\Lambda_K\leq p\}^{\circ}$, which are equivalent to the centroid
bodies by the work of Klartag and E.~Milman. In both cases, curvature at
each dyadic scale outside a subspace of codimension $O(p)$ leads, via
the min--max principle, Legendre duality, and the spherical Laplacian,
to the required estimate. The only high-dimensional input is the
dimension-free small-ball consequence of the slicing theorem.
\end{abstract}

\section{Introduction}

Let \(K\subset\R^n\) be an origin-symmetric convex body. Its Minkowski
functional and support function are defined by
\[
    \norm{x}_K
    =
    \inf\{t>0:x\in tK\},
    \qquad
    h_K(u)
    =
    \sup_{x\in K}\ip{x}{u}.
\]
We consider the mean gauge and the mean width, in the normalization
\[
    M(K)
    =
    \int_{\Sph^{n-1}}\norm{\theta}_K\,d\sigma(\theta),
    \qquad
    M^*(K)
    =
    \int_{\Sph^{n-1}}h_K(\theta)\,d\sigma(\theta),
\]
where \(\sigma\) is the Haar probability measure on
\(\Sph^{n-1}\). Since \(K=-K\),
\[
    M^*(K)=M(K^\circ).
\]
Equivalently, if \(G\) is a standard Gaussian vector in \(\R^n\),
then polar integration gives
\[
    \E\norm{G}_K
    \asymp
    \sqrt n\,M(K),
    \qquad
    \E h_K(G)
    \asymp
    \sqrt n\,M^*(K).
\]

Throughout the paper, \(\mu_K\) denotes the uniform probability
measure on \(K\). We say that \(K\) is in probabilistic isotropic
position if
\[
    \int_K x\,d\mu_K(x)=0,
    \qquad
    \Cov(\mu_K)=\Id_n.
\]
More generally, if \(\nu\) is a log-concave probability measure on
\(\R^r\), with density \(f_\nu\) and positive definite covariance
matrix, its isotropic constant is
\[
    L_\nu
    =
    \norm{f_\nu}_\infty^{1/r}
    \bigl(\det\Cov(\nu)\bigr)^{1/(2r)}.
\]
This quantity is invariant under invertible affine transformations.
In particular, if \(\nu\) is isotropic, then
\[
    L_\nu=\norm{f_\nu}_\infty^{1/r}.
\]
For a convex body \(K\), we write \(L_K=L_{\mu_K}\). Thus, if \(K\)
has volume one and is in the traditional isotropic position,
\[
    \Cov(\mu_K)=L_K^2\Id_n,
\]
whereas, if \(K\) is in probabilistic isotropic position, then
\[
    L_K=|K|^{-1/n}.
\]
We refer to \cite{BGVV} for the standard background on isotropic
position, isotropic constants, centroid bodies, and the equivalent
formulations of the slicing problem.

Bourgain's slicing problem asks whether there exists an absolute
constant \(C>0\) such that
\[
    L_\nu\leq C
\]
for every log-concave probability measure \(\nu\), independently of
its dimension. Equivalently, if
\[
    L_n
    =
    \sup\left\{
        L_\nu:
        \nu\text{ is a log-concave probability measure on }\R^r,
        \ 1\leq r\leq n
    \right\},
\]
then the conjecture asserts that
\[
    \sup_{n\geq1}L_n<\infty.
\]
The problem originates in Bourgain's work
\cite{Bourgain1986}. Guan \cite{Guan} obtained the estimate
\[
    L_n\leq C\log\log(n),
\]
and the conjecture was subsequently resolved in the affirmative by
Klartag and Lehec \cite{KlartagLehec}, using Guan's bound.

The slicing theorem enters both arguments of the present paper only
through a dimension-free small-ball estimate. Indeed, if \(\nu\) is
an isotropic log-concave probability measure on \(\R^r\), then, for
every \(z\in\R^r\) and every \(\varepsilon>0\),
\begin{equation}\label{eq:small-ball}
    \nu\bigl(z+\varepsilon\sqrt r\,B_2^r\bigr)
    \leq
    (C\varepsilon)^r.
\end{equation}
More precisely, without invoking slicing, one always has
\begin{equation}\label{eq:small-ball-L}
    \nu\bigl(z+\varepsilon\sqrt r\,B_2^r\bigr)
    \leq
    (C\varepsilon L_\nu)^r.
\end{equation}
This follows directly from the density bound. 
Thus \eqref{eq:small-ball} is precisely the consequence of
\eqref{eq:small-ball-L} obtained from the slicing theorem.

The estimate \eqref{eq:small-ball-L} should be distinguished from
the deeper small-ball theory for Euclidean norms of log-concave
random vectors. Paouris \cite{PaourisSmallBall} proved that, for an
isotropic log-concave random vector \(X\) in \(\R^r\),
\[
    \Pp\bigl(|X|\leq\varepsilon\sqrt r\bigr)
    \leq
    (C\varepsilon)^{c\sqrt r}.
\]
This estimate is independent of the isotropic constant, but its
exponent would yield only a codimension bound of order \(p^2\) in the
spectral arguments below, whereas our proofs require codimension
\(O(p)\). More recently, Bizeul
\cite{BizeulSmallBall} obtained a sharp estimate with exponent
proportional to the dimension, uniformly over the center of the ball,
and used it to give an alternative proof of the slicing theorem.
That result can therefore replace \eqref{eq:small-ball} in the
arguments below, but it is not a weaker independent substitute for
the slicing input.

We first study the mean gauge. Its estimation in isotropic position
is closely connected with the \(MM^*\)-theory and with the comparison
of isotropic log-concave measures with Gaussian measure. Earlier
estimates based on \(L_q\)-centroid bodies were obtained in
\cite{GiannopoulosMilman,GSTV,Vritsiou}. More recently, Bizeul and
Klartag \cite{BizeulKlartag} obtained an estimate involving the
third-moment parameter \(\kappa_n\). Letwin's quadratic-form estimate
\cite{LetwinKLS} yields a dimension-free bound for this parameter,
and consequently their result gives the optimal bound
\[
    M(K)
    \leq
    C\sqrt{\frac{\log(n)}{n}}
\]
for every convex body in probabilistic isotropic position.

The first argument of this paper gives the slightly weaker estimate
\begin{equation}\label{eq:intro-M}
    M(K)
    \leq
    C\frac{\log(n)}{\sqrt n}.
\end{equation}
The purpose is not to improve the best known bound, but to provide a
direct geometric proof. We introduce a convex body \(R_K\) by
quadratically combining the dyadic centroid bodies \(Z_p(K)\).
A weighted-covariance argument shows that the Hessian of the
\(p\)-th summand has curvature of order \(p\) outside a subspace of
codimension \(O(p)\). A dyadic eigenvalue-counting argument then gives
a logarithmic inverse-trace estimate. Legendre duality and integration
of the resulting Laplacian bound over the sphere complete the proof.

We next consider the dual quantity \(M^*(K)\). In the volume-one
isotropic normalization, E.~Milman \cite{MilmanMeanWidth} proved
\[
    M^*(K)
    \leq
    C\sqrt n\,(\log(n))^2L_K.
\]
After the resolution of the slicing problem, this yields
\[
    M^*(K)
    \leq
    C\sqrt n\,(\log(n))^2.
\]
Bizeul \cite{BizeulMMstar} recently obtained the optimal estimate
\[
    M^*(K)
    \leq
    C\sqrt{n\log(n)}.
\]
We prove the intermediate bound
\begin{equation}\label{eq:intro-Mstar}
    M^*(K)
    \leq
    C\sqrt n\,\log(n).
\end{equation}
As in the mean-gauge argument, the proof is deterministic once the
slicing theorem is admitted: it uses no stochastic localization,
martingale inequality, heat flow, and moreover, it avoids the Milman--Pisier theorem that E. Milman used.

Combining \eqref{eq:intro-M} and \eqref{eq:intro-Mstar}, we obtain
\[
    M(K)M^*(K)
    \leq
    C(\log(n))^2.
\]
Thus the resulting \(MM^*\)-estimate loses only one logarithmic factor
from the optimal order \(O(\log(n))\). We also mention the recent
work of Paouris and Pathak \cite{PaourisPathak}, who obtained optimal
affine mean-width and metric-entropy estimates for general convex
bodies.

The auxiliary body used in the proof of \eqref{eq:intro-Mstar} is
naturally suggested by the work of Klartag and E.~Milman. Define the
logarithmic Laplace transform of \(\mu_K\) and its Legendre transform
by
\[
    \Lambda_K(\xi)
    =
    \log\int_K e^{\ip{\xi}{x}}\,d\mu_K(x),
    \qquad
    \Lambda_K^*(x)
    =
    \sup_{\xi\in\R^n}
    \bigl\{
        \ip{\xi}{x}-\Lambda_K(\xi)
    \bigr\}.
\]
Klartag and E.~Milman \cite{KlartagMilmanLaplace} showed that the
level sets of \(\Lambda_K\) and the centroid bodies encode the same
geometry. More precisely, for \(2\leq p\leq n\),
\[
    \{\Lambda_K\leq p\}
    \simeq
    pZ_p(K)^\circ.
\]
Equivalently, the Laplace body
\[
    L_p(K)
    :=
    p\{\Lambda_K\leq p\}^{\circ}
\]
satisfies
\[
    L_p(K)\simeq Z_p(K).
\]
This equivalence is one of the central observations of
\cite{KlartagMilmanLaplace} and explains why the Laplace bodies are
the natural objects for the dual problem. They retain the scale and
containment properties of the centroid bodies, while their boundary
curvature is expressed directly through
\[
    \nabla^2\Lambda_K(\xi)
    =
    \Cov(\mu_{K,\xi}),
\]
where
\[
    d\mu_{K,\xi}(x)
    =
    e^{\ip{\xi}{x}-\Lambda_K(\xi)}\,d\mu_K(x)
\]
is the exponential tilt of \(\mu_K\). The required curvature estimate
is therefore reduced to a spectral estimate for tilted covariance
matrices. The same small-ball input \eqref{eq:small-ball} shows that,
if \(p=\Lambda_K(\xi)\geq2\), then
\[
    \#\left\{
        i:
        \lambda_i\bigl(\Cov(\mu_{K,\xi})\bigr)>D
    \right\}
    \leq
    2p
\]
for an absolute constant \(D>0\). Legendre duality turns this upper
spectral estimate into lower curvature for the gauges of the Laplace
bodies, and a dyadic aggregation followed by the same inverse-trace
and spherical-Laplacian mechanism yields
\eqref{eq:intro-Mstar}.

We finally record explicitly what the two arguments give if the
slicing theorem is not used. If \(K\) is in probabilistic isotropic
position, then \eqref{eq:small-ball-L} and the general bound
\(L_\nu\leq L_n\) give
\[
    M(K)
    \leq
    C L_n\frac{\log(n)}{\sqrt n}
\]
and
\[
    M^*(K)
    \leq
    C L_n\sqrt n\,\log(n).
\]
More precisely, the factor \(L_n\) may be replaced in the first
estimate by the supremum of the isotropic constants of the marginals
appearing in the weighted-covariance argument, and in the second by
the corresponding supremum over the whitened marginals of exponential
tilts.

Equivalently, suppose that \(K\) has volume one and is in the
traditional isotropic position,
\[
    \Cov(\mu_K)=L_K^2\Id_n.
\]
Applying the preceding estimates to
\[
    \widetilde K=L_K^{-1}K,
\]
and using
\[
    M(\widetilde K)=L_KM(K),
    \qquad
    M^*(\widetilde K)=\frac{1}{L_K}M^*(K),
\]
we obtain
\[
    M(K)
    \leq
    C\frac{L_n}{L_K}
    \frac{\log(n)}{\sqrt n}
\]
and
\[
    M^*(K)
    \leq
    C L_KL_n\sqrt n\,\log(n).
\]
Thus, in both arguments, the slicing theorem is used only to replace
the isotropic constants of the lower-dimensional measures arising in
the respective spectral constructions by an absolute constant.

\section{The mean-gauge estimate}

\begin{theorem}\label{thm:M}
Let $K=-K\subset\R^n$, and assume that the uniform probability measure
$\mu_K$ satisfies $\Cov(\mu_K)=\Id_n$. Then
\[
 M(K)\leq C\frac{\log(n)}{\sqrt n}.
\]
\end{theorem}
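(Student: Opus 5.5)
We follow the route announced in the introduction: build a smooth convex auxiliary body $R_K$ out of the dyadic centroid bodies, show that its gauge has large Hessian on most directions, and convert this into a bound on $M(K)$ by integrating a Laplacian inequality over the sphere.

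\textbf{The auxiliary function.} For $p=2^j$, $1\le j\le \log_2 n$, consider $h_p(\xi)=h_{Z_p(K)}(\xi)^2=\bigl(\int_K|\ip{x}{\xi}|^p\,d\mu_K\bigr)^{2/p}$. Define
\[
 \Phi(\xi)=\sum_{j}\frac{1}{p_j}\,h_{Z_{p_j}(K)}(\xi)^2 ,
\]
which is convex and $2$-homogeneous. Let $R_K$ be the body with $h_{R_K}^2=\Phi$. Since $Z_2(K)=B_2^n$ and $Z_n(K)\simeq K$, we get $K\subset C\sqrt{n}\,R_K$ up to constants, so $\norm{\theta}_K\lesssim \sqrt n\,\norm{\theta}_{R_K}$ up to the weights. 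This has to be calibrated; the weights $1/p$ are my guess and may need adjusting.

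\textbf{Curvature of each summand.} The Hessian of $\xi\mapsto\int|\ip{x}{\xi}|^p$ is $p(p-1)\int|\ip{x}{\xi}|^{p-2}x\otimes x\,d\mu_K$. This is a weighted covariance for the log-concave weight $|\ip{x}{\xi}|^{p-2}$. After normalizing by $h_{Z_p}(\xi)^{p-2}$ and passing through the power $2/p$, the Hessian of $h_p$ is at least of order $p$ times the covariance of the probability measure $\nu_\xi\propto|\ip{x}{\xi}|^{p-2}d\mu_K$, restricted to $\xi^\perp$. The key claim is that $\Cov(\nu_\xi)\ge c\,\Id$ outside a subspace of codimension $O(p)$.

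To prove it, let $E$ be the span of eigenvectors with eigenvalue $<c$, of dimension $r$. Project $\nu_\xi$ onto $E$ and whiten it. The density ratio $d\nu_\xi/d\mu_K$ is at most $e^{O(p)}$ times $|\ip{x}{\xi}|^{p-2}/\E|\ip{x}{\xi}|^{p-2}$ on a set of large $\mu_K$-measure. The marginal of $\mu_K$ on $E$ has covariance $\Id_r$, so by \eqref{eq:small-ball} it gives mass at most $(C\varepsilon)^r$ to any ball of radius $\varepsilon\sqrt r$. Meanwhile $\nu_\xi$ puts mass at least $1/2$ on a ball of radius $\sqrt{2cr}$. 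Comparing the two measures gives $(C\sqrt c)^r e^{O(p)}\ge 1/2$, hence $r=O(p)$ for $c$ small. I expect this to be the main obstacle: making the density comparison between $\nu_\xi$ and $\mu_K$ rigorous on the relevant set, with a loss of only $e^{O(p)}$. I would first try Markov/Hölder on the weight together with the reverse Hölder (Borell) inequality for $\ip{x}{\xi}$.

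\textbf{Counting eigenvalues.} The bound should be normalized scale-invariantly: divide by $\Phi(\xi)$ and use $h_{Z_p}\le Cp\,h_{Z_2}$. Summing the dyadic contributions, $\nabla^2\Phi(\xi)$ has, for each $j$, at least $n-Cp_j$ eigenvalues of size $\gtrsim$ the $j$-th scale. Min--max then gives $\lambda_k(\nabla^2\Phi)\gtrsim$ the contribution from the largest $p\lesssim n-k$. This yields the inverse-trace estimate
\[
 \Tr\bigl((\nabla^2\Phi)^{-1}\bigr)\lesssim \log n\cdot \Phi(\xi)^{-1}\times(\text{normalization}).
\]

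\textbf{Legendre duality and the Laplacian.} The gauge squared $\norm{\cdot}_{R_K}^2$ is the Legendre conjugate of $\Phi/4$, so its Hessian is $(\nabla^2\Phi)^{-1}$ at the dual point. This gives $\Delta\bigl(\norm{x}_{R_K}^2\bigr)\le C\log n$ in the appropriate normalization. For a $2$-homogeneous $F$, integrating over $\Sph^{n-1}$ gives $\Delta F=\Delta_{\Sph}F+2nF$ on the sphere, and $\int\Delta_{\Sph}F\,d\sigma=0$. Hence $\int\norm{\theta}_{R_K}^2\,d\sigma\le C\log n/n$.

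Finally, Cauchy--Schwarz together with the comparison between $K$ and $R_K$ from the first step, which costs one further $\sqrt{\log n}$ from the number of dyadic scales, gives $M(K)\le C\log n/\sqrt n$.
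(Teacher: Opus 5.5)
\textbf{There is a genuine gap: the weighting of the dyadic scales.} Your outline follows the paper's route: a quadratic aggregate of dyadic centroid bodies, the weighted measure $\nu\propto|\ip{x}{u}|^{p-2}\,d\mu_K$, min--max, Legendre duality and the spherical Laplacian. The step you call the main obstacle is routine and works as you suggest. If $A=\{|P_Fx|\le\delta\sqrt r\}$ satisfies $\nu(A)\ge\frac34$, then Berwald's inequality $W=\E|\ip{X}{u}|^{p-2}\ge c\,m^{p-2}$ and H\"older give $c\,m^{p-2}\le\int_A|\ip{x}{u}|^{p-2}\,d\mu_K\le m^{p-2}\mu_K(A)^{2/p}$. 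Hence $e^{-Cp}\le\mu_K(A)\le(C\delta)^r$, so $r\le Cp$.

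The gap is that the weighting determines the bound $\log(n)/\sqrt n$, and the choices you wrote down do not produce it. Grant that $D^2(\frac12h_{Z_p(K)}^2)\succeq cp\,\Id$ on a subspace of codimension $Cp$. Your weight $1/p$ cancels the factor $p$, so each summand contributes curvature only of order $1$. Intersecting the good subspaces of the scales $p_j\lesssim i$ then only yields $\lambda_i(\nabla^2\Phi)\gtrsim\log i$. This gives $\Tr(\nabla^2\Phi)^{-1}\lesssim n/\log n$ and merely $M(K)\lesssim(\log n)^{-1/2}$.

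Your comparison step is also reversed. $K\subseteq C\sqrt n\,R_K$ gives $\norm{\cdot}_{R_K}\le C\sqrt n\norm{\cdot}_K$, which is a lower bound for $\norm{\cdot}_K$. What is needed is $R_K\subseteq\lambda K$, which follows from $h_{Z_p(K)}\le h_K$ and $|u|\le Ch_K(u)$. With your weights this even gives $\lambda=O(1)$, so the $\sqrt{\log n}$ loss in your last paragraph belongs to a different $\Phi$. Dividing by $\Phi(\xi)$ is not meaningful either, since $\nabla^2\Phi$ is $0$-homogeneous and the curvature bound has to be absolute.

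\textbf{The fix is unit weights.} Take $h_R^2=|u|^2+\sum_{4\le p\le n}h_{Z_p(K)}^2$. The comparison then costs exactly $\sqrt{\log n}$, namely $R\subseteq C\sqrt{\log n}\,K$, while each scale contributes curvature of order $p$.

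For the curvature you must restrict to $E_{p,u}=G_{p,u}\cap\ker\ell_{p,u}$, not to $u^\perp$. On $u^\perp$ the term $-(p-2)\ell_{p,u}(v)^2$ can reduce the Hessian to about $Q_{p,u}(v)$, losing the factor $p$, when $\Cov(\nu)$ is anisotropic. On $\ker\ell_{p,u}$ the Hessian equals $(p-1)Q_{p,u}$. On $E_{p,u}$ this is $\succeq(p-1)b\,\Id$, an absolute bound because $Q_{p,u}(v)=\frac{W}{m^{p-2}}\int\ip{x}{v}^2\,d\nu$ with $W/m^{p-2}\ge c$.

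The other summands are positive semidefinite and the Euclidean term gives $\nabla^2\Phi\succeq\Id$. Min--max therefore yields $\#\{i:\lambda_i(\nabla^2\Phi(u))<t\}\le Ct$ for all $t\ge1$. The layer-cake formula then gives $\Tr(\nabla^2\Phi)^{-1}\le C\log n$, hence $\int\norm{\theta}_R^2\,d\sigma\le C\log(n)/n$ and $M(K)\le C\sqrt{\log n}\cdot\sqrt{\log(n)/n}$.

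In general, weights $w_p$ cost $(\sum_pw_p)^{1/2}$ in the comparison and give curvature $w_p\,p$ at scale $p$. The choice $w_p\equiv1$ is what keeps both losses logarithmic.
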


\begin{proof}
We may assume that $n\geq4$. Let $X\sim\mu_K$. For $p\geq2$, define
\[
 h_{Z_p(K)}(u)=\bigl(\E|\ip{X}{u}|^p\bigr)^{1/p},
\]
and define a symmetric convex body $R=R_K$ by
\begin{equation}\label{eq:R-def}
 h_R(u)^2
 =
 |u|^2+
 \sum_{\substack{p=2^j\\4\leq p\leq n}}
 h_{Z_p(K)}(u)^2.
\end{equation}
Since $X\in K$ almost surely,
\[
 h_{Z_p(K)}(u)\leq h_K(u).
\]
Moreover, if $Y=\ip{X}{u}$, the one-dimensional reverse moment
inequality gives
\[
 h_K(u)\geq\E|Y|\geq c(\E Y^2)^{1/2}=c|u|.
\]
There are $O\bigl(\log(n)\bigr)$ dyadic scales, and therefore
\[
 h_R(u)^2\leq C\log(n)\,h_K(u)^2.
\]
Equivalently,
\[
 R\subseteq C\sqrt{\log(n)}\,K,
 \qquad
 \norm{x}_K\leq C\sqrt{\log(n)}\,\norm{x}_R.
\]
Thus
\begin{equation}\label{eq:M-reduction}
 M(K)
 \leq
 C\sqrt{\log(n)}
 \left(
 \int_{\Sph^{n-1}}\norm{\theta}_R^2\,d\sigma(\theta)
 \right)^{1/2}.
\end{equation}

Set
\[
 \Phi(u)=\frac12h_R(u)^2,
 \qquad
 \Psi=\Phi^*=\frac12\norm{\cdot}_R^2.
\]
The function $\Psi$ is $2$-homogeneous. The polar-coordinate formula
for the Euclidean Laplacian, followed by integration of the spherical
Laplacian, gives
\begin{equation}\label{eq:spherical-M}
 n\int_{\Sph^{n-1}}\norm{\theta}_R^2\,d\sigma(\theta)
 =
 \int_{\Sph^{n-1}}\Delta\Psi(\theta)\,d\sigma(\theta).
\end{equation}
Since the Euclidean term in \eqref{eq:R-def} makes $\Phi$ uniformly
strongly convex, Legendre duality gives
\[
 \nabla^2\Psi(x)
 =
 \left[\nabla^2\Phi\bigl(\nabla\Psi(x)\bigr)\right]^{-1}.
\]
It is therefore enough to show that
\begin{equation}\label{eq:M-inverse-trace-goal}
 \Tr\left[\nabla^2\Phi(u)\right]^{-1}
 \leq
 C\log(n),
 \qquad u\neq0.
\end{equation}

For a dyadic $p\in[4,n]$, write
\[
 \Phi_p(u)=\frac12\bigl(\E|\ip{X}{u}|^p\bigr)^{2/p},
 \qquad
 m=\bigl(\E|\ip{X}{u}|^p\bigr)^{1/p}.
\]
For $v\in\R^n$, set
\[
 Q_{p,u}(v)
 =
 m^{2-p}\E\bigl[|\ip{X}{u}|^{p-2}\ip{X}{v}^2\bigr]
\]
and
\[
 \ell_{p,u}(v)
 =
 m^{1-p}\E\bigl[|\ip{X}{u}|^{p-2}\ip{X}{u}\ip{X}{v}\bigr].
\]
Direct differentiation gives
\begin{equation}\label{eq:M-Hessian}
 D^2\Phi_p(u)[v,v]
 =
 (p-1)Q_{p,u}(v)-(p-2)\ell_{p,u}(v)^2.
\end{equation}
Cauchy--Schwarz yields $\ell_{p,u}(v)^2\leq Q_{p,u}(v)$, so
$D^2\Phi_p(u)$ is positive semidefinite.

We claim that there is a subspace $G_{p,u}\subseteq\R^n$ such that
\begin{equation}\label{eq:M-good-space}
 \codim G_{p,u}\leq Cp,
 \qquad
 Q_{p,u}(v)\geq b|v|^2
 \quad(v\in G_{p,u}),
\end{equation}
where $b>0$ is absolute. Put
\[
 W=\E|\ip{X}{u}|^{p-2}.
\]
The logarithmic Berwald inequality gives
\begin{equation}\label{eq:adjacent-moments}
 W\geq c m^{p-2};
\end{equation}
see Borell \cite{Borell}. Define a probability measure $\nu$ by
\[
 d\nu(x)=\frac{|\ip{x}{u}|^{p-2}}{W}\,d\mu_K(x).
\]
Then
\[
 Q_{p,u}(v)
 =
 \frac{W}{m^{p-2}}\int_K\ip{x}{v}^2\,d\nu(x)
 \geq
 c\int_K\ip{x}{v}^2\,d\nu(x).
\]

Let $F$ be the spectral subspace of $Q_{p,u}$ corresponding to
eigenvalues smaller than $b$, and let $r=\dim F$. If
$e_1,\dots,e_r$ is an orthonormal basis of $F$, then
\[
 \int_K|P_Fx|^2\,d\nu(x)
 \leq
 C\sum_{j=1}^r Q_{p,u}(e_j)
 \leq
 Cbr.
\]
Choose an absolute $\delta>0$, and then $b>0$ sufficiently small. By
Markov's inequality,
\[
 \nu\bigl(|P_Fx|\leq\delta\sqrt r\bigr)\geq\frac34.
\]
For
\[
 A=\{x\in K:|P_Fx|\leq\delta\sqrt r\},
\]
this and \eqref{eq:adjacent-moments} imply
\[
 \int_A|\ip{x}{u}|^{p-2}\,d\mu_K(x)
 \geq
 c m^{p-2}.
\]
Hölder's inequality gives the reverse estimate
\[
 \int_A|\ip{x}{u}|^{p-2}\,d\mu_K(x)
 \leq
 m^{p-2}\mu_K(A)^{2/p},
\]
and hence
\begin{equation}\label{eq:M-lower-small-ball}
 \mu_K(A)\geq e^{-Cp}.
\end{equation}
The random vector $P_FX$ is isotropic and log-concave in $F$. Applying
\eqref{eq:small-ball} at the origin gives
\[
 \mu_K(A)\leq(C\delta)^r\leq e^{-cr}
\]
when $\delta$ is sufficiently small. Comparing this with
\eqref{eq:M-lower-small-ball} yields $r\leq Cp$, proving
\eqref{eq:M-good-space}.

Set
\[
 E_{p,u}=G_{p,u}\cap\ker\ell_{p,u}.
\]
Then $\codim E_{p,u}\leq Cp$, and \eqref{eq:M-Hessian} gives
\[
 D^2\Phi_p(u)[v,v]\geq cp|v|^2,
 \qquad v\in E_{p,u}.
\]
Let $H(u)=\nabla^2\Phi(u)$. All the forms $D^2\Phi_q(u)$ are positive
semidefinite, so the min--max principle gives
\[
 \#\{i:\lambda_i(H(u))<1+cp\}\leq Cp.
\]
Choosing a dyadic $p$ comparable to a spectral threshold $t$, and
using $H(u)\succeq\Id_n$ and the trivial bound by $n$ for $t\gtrsim n$,
we obtain
\[
 N_u(t):=\#\{i:\lambda_i(H(u))<t\}\leq Ct,
 \qquad t\geq1.
\]
Consequently, by the layer-cake formula,
\[
 \Tr H(u)^{-1}
 =
 \int_0^1N_u(1/s)\,ds
 \leq
 n\int_0^{1/n}ds+C\int_{1/n}^1\frac{ds}{s}
 \leq
 C\log(n).
\]
This proves \eqref{eq:M-inverse-trace-goal}. By
\eqref{eq:spherical-M},
\[
 \int_{\Sph^{n-1}}\norm{\theta}_R^2\,d\sigma(\theta)
 \leq
 C\frac{\log(n)}{n}.
\]
Substitution in \eqref{eq:M-reduction} completes the proof.
\end{proof}

\section{Laplace bodies and tilted covariances}

From now on, $K=-K$, $\Cov(\mu_K)=\Id_n$, and we abbreviate
$\Lambda=\Lambda_K$. For $p\geq2$, set
\[
 \Lambda_p=\{\xi\in\R^n:\Lambda(\xi)\leq p\},
 \qquad
 L_p(K)=p\Lambda_p^\circ.
\]
The comparison theorem of Klartag and E.~Milman
\cite[Lemma~2.3] {KlartagMilmanLaplace} gives that for $2\leq p\leq n$
\begin{equation}\label{eq:Laplace-centroid}
 cZ_p(K)\subseteq L_p(K)\subseteq CZ_p(K).
\end{equation}
Together with the standard inclusions
$K\subseteq CZ_n(K)\subseteq C(n/p)Z_p(K)$, this yields
\begin{equation}\label{eq:K-in-Lp}
 K\subseteq C\frac npL_p(K).
\end{equation}

For $\xi\in\R^n$, let $\mu_\xi$ be the exponential tilt
\[
 d\mu_\xi(x)
 =
 e^{\ip{\xi}{x}-\Lambda(\xi)}\,d\mu_K(x).
\]
Then
\[
 \nabla\Lambda(\xi)=\E_{\mu_\xi}X,
 \qquad
 \nabla^2\Lambda(\xi)=\Cov(\mu_\xi).
\]

\begin{proposition}
\label{prop:tilted-covariance}
There is an absolute constant $D>0$ such that, for every
$\xi\in\R^n$ with $p:=\Lambda(\xi)\geq2$,
\[
 \#\bigl\{i:\lambda_i(\Cov(\mu_\xi))>D\bigr\}
 \leq
 2p.
\]
\end{proposition}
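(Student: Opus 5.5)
The plan is to compare two estimates for the $\mu_\xi$-measure of one fixed Euclidean ball in the top spectral subspace: a lower bound that costs only a factor $e^{-p}$, and an upper bound from the small-ball estimate \eqref{eq:small-ball} that decays exponentially in the dimension of that subspace. Fix $\xi$ with $p=\Lambda(\xi)\geq2$. Let $F$ be the spectral subspace of $\Cov(\mu_\xi)$ for eigenvalues $>D$, and put $r=\dim F$. Assume $r\geq1$; otherwise there is nothing to prove. The goal is $r\leq 2p$.

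\emph{Lower bound.} Consider the symmetric set
\[
A=\{x\in K:|P_Fx|\leq 2\sqrt r\}.
\]
Since $\Cov(\mu_K)=\Id_n$, we have $\E_{\mu_K}|P_FX|^2=r$. Markov's inequality then gives $\mu_K(A)\geq 3/4$. Next,
\[
\mu_\xi(A)=e^{-\Lambda(\xi)}\int_A e^{\ip{\xi}{x}}\,d\mu_K(x).
\]
Apply Jensen's inequality to the normalized restriction $\mu_K(\cdot\,|\,A)$. Because $K=-K$, the set $A$ is origin-symmetric, so $\int_A\ip{\xi}{x}\,d\mu_K=0$. Hence
\[
\mu_\xi(A)\geq \mu_K(A)\,e^{-p}\geq \tfrac34 e^{-p}.
\]

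\emph{Upper bound.} The measure $\mu_\xi$ is log-concave, since it is a log-linear tilt of a uniform measure on a convex body. So its marginal $\nu=(P_F)_\#\mu_\xi$ on $F$ is log-concave, and its covariance $\Sigma=P_F\Cov(\mu_\xi)P_F|_F$ satisfies $\Sigma\succeq D\,\Id_F$. Let $a$ be the barycenter of $\nu$, and let $T=\Sigma^{-1/2}(\cdot-a)$ be the whitening map, which makes $T_\#\nu$ isotropic on $F\cong\R^r$. Since $\|\Sigma^{-1/2}\|\leq D^{-1/2}$, the map $T$ sends the ball $2\sqrt r\,B_F$ into a ball $z+2D^{-1/2}\sqrt r\,B_2^r$ for some $z$. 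The small-ball estimate \eqref{eq:small-ball}, applied with arbitrary center $z$ and $\varepsilon=2D^{-1/2}$, gives
\[
\mu_\xi(A)\leq \nu(2\sqrt r\,B_F)\leq (2C D^{-1/2})^r.
\]
Choosing the absolute constant $D$ large enough that $2CD^{-1/2}\leq e^{-1}$, this becomes $\mu_\xi(A)\leq e^{-r}$.

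\emph{Conclusion.} Combining the two bounds gives $e^{-r}\geq \tfrac34 e^{-p}$, that is, $r\leq p+\log(4/3)$. Since $p\geq2$, this is at most $2p$.

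The step needing the most care is the lower bound. The Radon–Nikodym density $e^{\ip{\xi}{x}-\Lambda(\xi)}$ can be tiny on parts of $K$. The Jensen argument on a symmetric set is what limits the loss to exactly $e^{-\Lambda(\xi)}=e^{-p}$, and it uses the symmetry of $K$ in an essential way. The uniformity of \eqref{eq:small-ball} in the center $z$ is also needed, because the tilted marginal is not centered.
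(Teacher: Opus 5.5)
Your proof is correct and follows the paper's argument. Both use the same spectral subspace $F$, a Markov lower bound for the $\mu_K$-mass of the cylinder over a ball of radius $\asymp\sqrt r$ in $F$, whitening plus the center-uniform small-ball estimate for its $\mu_\xi$-mass, and Jensen's inequality to compare the two. The only difference is the Jensen step: you use $K=-K$ to make $\int_A\langle \xi,x\rangle\,d\mu_K=0$ on the symmetric cylinder, whereas the paper applies Jensen to both the cylinder and its complement and adds the results (a binary relative-entropy bound). The paper's version needs only that $\mu_K$ is centered, which is why it can reuse the proposition in the non-symmetric remark.
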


\begin{proof}
Let $A=\Cov(\mu_\xi)$ and let $F$ be the span of the eigenvectors of
$A$ with eigenvalue larger than $D$. Write $r=\dim F$, and let
\[
 \alpha=(P_F)_*\mu_K,
 \qquad
 \beta=(P_F)_*\mu_\xi.
\]
Then $\alpha$ is isotropic in $F$, whereas
$B:=\Cov(\beta)\succeq D\Id_F$. Set
\[
 E=\sqrt{2r}\,B_2^F.
\]
Since $\int_F|y|^2\,d\alpha(y)=r$, Markov's inequality gives
\begin{equation}\label{eq:alpha-E}
 \alpha(E)\geq\frac12.
\end{equation}

Let $b$ be the barycenter of $\beta$ and define
$T(y)=B^{-1/2}(y-b)$. The measure $\widetilde\beta=T_*\beta$ is
isotropic and log-concave on $F$. Moreover,
\[
 T(E)
 \subseteq
 -B^{-1/2}b+\sqrt{\frac{2r}{D}}\,B_2^F.
\]
By \eqref{eq:small-ball}, choosing $D$ sufficiently large gives
\begin{equation}\label{eq:beta-E}
 \beta(E)=\widetilde\beta(T(E))\leq e^{-2r}.
\end{equation}

Let
\[
    \widetilde E=P_F^{-1}(E).
\]
Then
\[
    a=\mu_K(\widetilde E)=\alpha(E),
    \qquad
    q=\mu_\xi(\widetilde E)=\beta(E).
\]
Since
\[
    \frac{d\mu_\xi}{d\mu_K}(x)
    =
    \exp\bigl(\langle \xi,x\rangle-\Lambda(\xi)\bigr)
    =
    \exp\bigl(\langle \xi,x\rangle-p\bigr),
\]
we have
\[
    \frac{q}{a}
    =
    \frac{1}{a}
    \int_{\widetilde E}
    \exp\bigl(\langle \xi,x\rangle-p\bigr)\,d\mu_K(x).
\]
By Jensen's inequality,
\begin{equation}\label{eq:Jensen-on-E}
    \log\frac{q}{a}
    \geq
    \frac{1}{a}
    \int_{\widetilde E}
    \bigl(\langle \xi,x\rangle-p\bigr)\,d\mu_K(x).
\end{equation}

Suppose first that \(a<1\). Applying the same argument to
\(\widetilde E^{\,c}\), we obtain
\begin{equation}\label{eq:Jensen-on-Ec}
    \log\frac{1-q}{1-a}
    \geq
    \frac{1}{1-a}
    \int_{\widetilde E^{\,c}}
    \bigl(\langle \xi,x\rangle-p\bigr)\,d\mu_K(x).
\end{equation}
Multiplying \eqref{eq:Jensen-on-E} by \(a\), multiplying
\eqref{eq:Jensen-on-Ec} by \(1-a\), and adding, we get
\[
\begin{aligned}
    a\log\frac{q}{a}
    +(1-a)\log\frac{1-q}{1-a}
    &\geq
    \int_K
    \bigl(\langle \xi,x\rangle-p\bigr)\,d\mu_K(x) \\
    &=
    -p,
\end{aligned}
\]
because \(\mu_K\) is centered. Therefore,
\[
    p
    \geq
    a\log\frac{a}{q}
    +(1-a)\log\frac{1-a}{1-q}.
\]
Since
\[
    a\log a+(1-a)\log(1-a)\geq-\log 2
\]
and
\[
    (1-a)\log\frac{1}{1-q}\geq0,
\]
it follows that
\[
    p
    \geq
    a\log\frac{1}{q}-\log 2.
\]
Using \eqref{eq:alpha-E} and \eqref{eq:beta-E}, namely
\[
    a\geq\frac12,
    \qquad
    q\leq e^{-2r},
\]
we conclude that
\[
    p
    \geq
    \frac12\log\frac{1}{q}-\log 2
    \geq
    r-\log 2.
\]

If \(a=1\), then \eqref{eq:Jensen-on-E} gives directly
\[
    \log q
    \geq
    \int_K
    \bigl(\langle \xi,x\rangle-p\bigr)\,d\mu_K(x)
    =
    -p,
\]
and hence
\[
    p\geq\log\frac{1}{q}\geq2r.
\]
Thus, in all cases,
\[
    r\leq p+\log 2\leq2p,
\]
where the last inequality follows from \(p\geq2\).
\end{proof}

\section{The mean-width estimate}

\begin{theorem}\label{thm:Mstar}
Let $K=-K\subset\R^n$, and assume that $\Cov(\mu_K)=\Id_n$. Then
\[
 M^*(K)\leq C\sqrt n\,\log(n).
\]
\end{theorem}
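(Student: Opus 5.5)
The plan is to dualize the proof of Theorem~\ref{thm:M}. The Laplace bodies $L_p(K)$ play the role of the centroid bodies, and Proposition~\ref{prop:tilted-covariance} supplies the curvature at each dyadic scale. Assume $n\geq4$ and define a convex $2$-homogeneous function by
\[
 \Psi(x)=\frac{1}{2n^2}|x|^2+\frac12\sum_{\substack{p=2^j\\2\leq p\leq n}}\Bigl(\frac pn\Bigr)^2\norm{x}_{L_p(K)}^2
 =:\frac12\norm{x}_T^2 .
\]
This defines a symmetric convex body $T$.

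\textbf{Reduction to a trace bound.} By \eqref{eq:K-in-Lp}, $\norm{x}_{L_p(K)}\leq Cn/p$ for $x\in K$. The inclusion $K\subseteq C nB_2^n$ controls the Euclidean term, since $h_K(u)\leq C h_{Z_n}(u)\leq Cn|u|$ by the one-dimensional moment comparison. Summing over the $O(\log n)$ dyadic scales gives $\Psi\leq C\log n$ on $K$, that is, $K\subseteq C\sqrt{\log n}\,T$. Hence
\[
 M^*(K)\leq C\sqrt{\log n}\Bigl(\int_{\Sph^{n-1}}h_T^2\,d\sigma\Bigr)^{1/2}.
\]
As in \eqref{eq:spherical-M}, with $\Phi=\Psi^*=\frac12h_T^2$,
\[
 n\int h_T^2\,d\sigma=\int\Delta\Phi\,d\sigma.
\]
The Euclidean term makes $\Psi$ uniformly convex, so Legendre duality gives $\Delta\Phi=\Tr[\nabla^2\Psi(\nabla\Phi)]^{-1}$. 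It therefore suffices to prove
\[
 \Tr[\nabla^2\Psi(x)]^{-1}\leq Cn^2\log n\qquad(x\neq0).
\]
This yields $\int h_T^2\leq Cn\log n$ and hence $M^*(K)\leq C\sqrt n\log n$.

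\textbf{Curvature at one scale.} Since $\norm{x}_{L_p(K)}=h_{\Lambda_p}(x)/p$, the $p$-th summand equals $\frac1{n^2}\cdot\frac12h_{\Lambda_p}^2$. The task is to show that $\nabla^2(\frac12h_{\Lambda_p}^2)\succeq (c/p)\Id$ on a subspace of codimension at most $Cp$ at every point.

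Again by Legendre duality, this is equivalent to the upper bound
\[
 \nabla^2\bigl(\tfrac12\norm{\cdot}_{\Lambda_p}^2\bigr)(\xi)\preceq \frac{C}{p}\cdot(\text{scale})
\]
outside $O(p)$ directions, at boundary points $\xi$ with $\Lambda(\xi)=p$. Writing $N=\norm{\cdot}_{\Lambda_p}$, one has
\[
 \nabla^2(\tfrac12N^2)=\nabla N\otimes\nabla N+N\nabla^2N .
\]
Implicit differentiation of $\Lambda(\xi/N(\xi))=p$ expresses $\nabla^2N(\xi)$ on the tangent space as a quantity dominated by $\nabla^2\Lambda(\xi)/\ip{\nabla\Lambda(\xi)}{\xi}$. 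Because $\Lambda$ is convex with $\Lambda(0)=0$, we have $\ip{\nabla\Lambda(\xi)}{\xi}\geq\Lambda(\xi)=p$.

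By Proposition~\ref{prop:tilted-covariance}, $\nabla^2\Lambda(\xi)=\Cov(\mu_\xi)\preceq D$ outside at most $2p$ eigendirections. Discarding these, together with $\nabla N$ and the radial direction (codimension $2p+2$), gives $\nabla^2(\frac12N^2)(\xi)\preceq (D/p)\Id$ on the rest. The degree-$2$ homogeneity fixes the normalization at $|\xi|$-scale; this should be tracked carefully, and one may have to use $cB_2^n\subseteq L_p(K)$, from $Z_2=B_2^n$ and \eqref{eq:Laplace-centroid}.

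Inverting via min--max then gives $\nabla^2(\frac12h_{\Lambda_p}^2)\succeq (p/D)\Id$ on a subspace of codimension at most $Cp$. Dividing by $p^2$ and weighting by $(p/n)^2$, the $p$-th summand of $\nabla^2\Psi$ is $\succeq (cp/n^2)\Id$ outside codimension $Cp$.

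\textbf{Counting and the main obstacle.} All summands are positive semidefinite, so the min--max principle gives
\[
 \#\{i:\lambda_i(\nabla^2\Psi)<cp/n^2\}\leq Cp .
\]
Choosing $p$ dyadic and comparable to $n^2t$, and using $\nabla^2\Psi\succeq n^{-2}\Id$, we obtain $N(t)\leq Cn^2t$ for $t\geq n^{-2}$. The layer-cake computation from the proof of Theorem~\ref{thm:M} then gives $\Tr[\nabla^2\Psi]^{-1}\leq Cn^2\log n$.

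The step I expect to be the main obstacle is the Legendre-duality transfer from the upper bound on the tilted covariance to the lower Hessian bound for $\frac12h_{\Lambda_p}^2$. Two points need care there: that the level-set curvature is controlled by $\nabla^2\Lambda/\ip{\nabla\Lambda}{\xi}$ with the correct homogeneity normalization, and that the $O(1)$ exceptional directions (radial and gradient) do not spoil the count. Possible non-smoothness of $h_{\Lambda_p}$ would be handled by approximation.
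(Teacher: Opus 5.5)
Your proposal is correct and is essentially the paper's proof. Since $\tfrac12h_{L_p(K)}^2=\tfrac{p^2}{2}\norm{\cdot}_{\Lambda_p}^2$, your level-set computation $D^2\bigl(\tfrac12\norm{\cdot}_{\Lambda_p}^2\bigr)(\xi)[v,v]=\ip{Av}{v}/\ip{\nabla\Lambda(\xi)}{\xi}\le D|v|^2/p$ (for $\Lambda(\xi)=p$ and $v\in F\cap\nabla\Lambda(\xi)^\perp$) is exactly the paper's formula for $D^2\varphi_p$, and the rest follows the same Legendre inversion, dyadic min--max counting, inverse-trace bound and spherical Laplacian identity; your unnormalized sum merely trades the paper's $1/N$ factor for the $\sqrt{\log n}$ loss in $K\subseteq C\sqrt{\log n}\,T$. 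The only slip is the opening target ``$\succeq (c/p)\Id$'', which should read $\succeq cp\,\Id$ as your later inversion correctly gives, and the ``scale'' worry is moot because the Hessian of a $2$-homogeneous function is $0$-homogeneous, so no appeal to $cB_2^n\subseteq L_p(K)$ is needed.
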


\begin{proof}
We may assume that $n$ is larger than an absolute constant. Fix a
small absolute $\eta>0$, and let
\[
 \mathcal P
 =
 \{2,4,8,\ldots,2^m\},
 \qquad
 2^m\leq\eta n<2^{m+1}.
\]
Put $N=1+|\mathcal P|$, so that $N\asymp\log(n)$.

For $p\in\mathcal P$, write
\[
 h_p=h_{L_p(K)},
 \qquad
 \varphi_p(u)=\frac12h_p(u)^2,
 \qquad
 \psi_p=\varphi_p^*=\frac12\norm{\cdot}_{L_p(K)}^2.
\]
We first record the curvature information supplied by
Proposition~\ref{prop:tilted-covariance}. Fix $u\neq0$, and let $s>0$
be determined by
\[
 \Lambda(su)=p.
\]
Set
\[
 \xi=su,
 \qquad
 m=\nabla\Lambda(\xi),
 \qquad
 A=\nabla^2\Lambda(\xi)=\Cov(\mu_\xi),
 \qquad
 \alpha=\ip{m}{u}.
\]
Since $L_p(K)=p\Lambda_p^\circ$,
\begin{equation}\label{eq:hp-p-over-s}
 h_p(u)=\frac ps.
\end{equation}
For $v\in m^\perp$, a direct differentiation of the level-set
identity
\[
 \Lambda\left(
 \frac{p}{h_p(u+tv)}(u+tv)
 \right)=p
\]
at $t=0$ gives
\begin{equation}\label{eq:Laplace-curvature}
 D^2\varphi_p(u)[v,v]
 =
 p\frac{h_p(u)}{\alpha}\ip{Av}{v}.
\end{equation}
Indeed, if $h(t)=h_p(u+tv)$, then $h'(0)=0$ for $v\perp m$,
$\xi'(0)=sv$, and the second derivative of the level-set identity gives
$h''(0)=p\ip{Av}{v}/\alpha$; since
$\varphi_p=h_p^2/2$, this is \eqref{eq:Laplace-curvature}. Finally, the
convexity of $r\mapsto\Lambda(ru)$ gives
\[
 p=\Lambda(su)\leq s\ip{\nabla\Lambda(su)}{u}=s\alpha,
\]
and hence, by \eqref{eq:hp-p-over-s},
\begin{equation}\label{eq:Laplace-curvature-upper}
 D^2\varphi_p(u)[v,v]
 \leq
 p\ip{Av}{v},
 \qquad v\in m^\perp.
\end{equation}

By Proposition~\ref{prop:tilted-covariance}, there is a subspace
$F\subseteq\R^n$ with $\codim F\leq2p$ such that
$A|_F\preceq D\Id_F$. Thus, on $F\cap m^\perp$,
\[
 D^2\varphi_p(u)\preceq Dp\Id.
\]
The min--max principle shows that
\begin{equation}\label{eq:varphi-high-spectrum}
 \#\{i:\lambda_i(\nabla^2\varphi_p(u))>C_1p\}
 \leq
 C_1p
\end{equation}
for an absolute $C_1$.

Legendre duality now gives the complementary lower spectral estimate
for $\psi_p$. Namely, for every $x\neq0$, there is a subspace
$E_{p,x}\subseteq\R^n$ such that
\begin{equation}\label{eq:psi-good-space}
 \codim E_{p,x}\leq C_1p,
 \qquad
 D^2\psi_p(x)|_{E_{p,x}}
 \succeq
 \frac1{C_1p}\Id_{E_{p,x}}.
\end{equation}

Define a symmetric convex body $S=S_K$ through its Minkowski
functional by
\begin{equation}\label{eq:S-def}
 \norm{x}_S^2
 =
 \frac1N
 \left[
 \frac{|x|^2}{n^2}
 +
 \sum_{p\in\mathcal P}
 \left(\frac pn\right)^2
 \norm{x}_{L_p(K)}^2
 \right].
\end{equation}
The right-hand side is the square of a norm. We first compare $S$ with
$K$. If $x\in K$, then \eqref{eq:K-in-Lp} gives
\[
 \frac pn\norm{x}_{L_p(K)}\leq C,
 \qquad p\in\mathcal P.
\]
The same standard inclusion with $p=2$, together with
$L_2(K)\simeq Z_2(K)=B_2^n$, gives $|x|/n\leq C$. Hence
$\norm{x}_S\leq C$, and therefore
\begin{equation}\label{eq:K-in-S}
 K\subseteq CS.
\end{equation}

Set
\[
 \psi(x)=\frac12\norm{x}_S^2
 =
 \frac1N
 \left[
 \frac{|x|^2}{2n^2}
 +
 \sum_{p\in\mathcal P}
 \left(\frac pn\right)^2\psi_p(x)
 \right]
\]
and
\[
 \varphi=\psi^*=\frac12h_S^2.
\]
Let $H(x)=\nabla^2\psi(x)$ and denote its eigenvalues in increasing
order by
\[
 0<\mu_1(x)\leq\cdots\leq\mu_n(x).
\]
From \eqref{eq:psi-good-space}, for each $p\in\mathcal P$ and
$v\in E_{p,x}$,
\[
 \ip{H(x)v}{v}
 \geq
 \frac1N\left(\frac pn\right)^2
 D^2\psi_p(x)[v,v]
 \geq
 \frac{p}{C_1Nn^2}|v|^2.
\]
Since $\codim E_{p,x}\leq C_1p$, the min--max principle yields
\begin{equation}\label{eq:H-counting-Mstar}
 \#\left\{i:\mu_i(x)<\frac{p}{C_1Nn^2}\right\}
 \leq
 C_1p.
\end{equation}
The Euclidean term in \eqref{eq:S-def} gives
\[
 H(x)\succeq\frac1{Nn^2}\Id_n.
\]
We now pass from \eqref{eq:H-counting-Mstar} to an ordered-eigenvalue
estimate. Let $p_{\max}=2^m$. For bounded $i$, the Euclidean term is
enough. If $i$ is larger than an absolute constant and
$i\leq2C_1p_{\max}$, choose a dyadic $p\in\mathcal P$ such that
\[
 \frac{i}{4C_1}\leq p<\frac{i}{2C_1}.
\]
Then $C_1p<i$, and \eqref{eq:H-counting-Mstar} implies
\[
 \mu_i(x)
 \geq
 \frac{p}{C_1Nn^2}
 \geq
 c\frac{i}{Nn^2}.
\]
Finally, applying \eqref{eq:H-counting-Mstar} with $p=p_{\max}$
shows that
\[
 \mu_{\lfloor C_1p_{\max}\rfloor+1}(x)
 \geq
 \frac{p_{\max}}{C_1Nn^2}.
\]
Since $p_{\max}\simeq n$ and the eigenvalues are increasing, the
same lower bound, and hence a lower bound by $ci/(Nn^2)$, holds for all
remaining indices. Choosing $\eta>0$ sufficiently small only ensures
that the indicated index is at most $n$. We have therefore proved
\begin{equation}\label{eq:linear-growth-Mstar}
 \mu_i(x)
 \geq
 c\frac{i}{Nn^2},
 \qquad
 i=1,\ldots,n.
\end{equation}

It follows that
\begin{equation}\label{eq:inverse-trace-Mstar}
 \Tr H(x)^{-1}
 \leq
 CNn^2\sum_{i=1}^n\frac1i
 \leq
 Cn^2(\log(n))^2.
\end{equation}
By Legendre duality,
\[
 \nabla^2\varphi(u)
 =
 \left[\nabla^2\psi\bigl(\nabla\varphi(u)\bigr)\right]^{-1},
\]
and hence \eqref{eq:inverse-trace-Mstar} gives
\[
 \Delta\varphi(u)\leq Cn^2(\log(n))^2,
 \qquad u\neq0.
\]
Since $\varphi=h_S^2/2$ is $2$-homogeneous, the spherical Laplacian
identity gives
\[
 n\int_{\Sph^{n-1}}h_S(\theta)^2\,d\sigma(\theta)
 =
 \int_{\Sph^{n-1}}\Delta\varphi(\theta)\,d\sigma(\theta).
\]
Therefore
\[
 \int_{\Sph^{n-1}}h_S(\theta)^2\,d\sigma(\theta)
 \leq
 Cn(\log(n))^2,
\]
and Cauchy--Schwarz yields
\[
 M^*(S)\leq C\sqrt n\,\log(n).
\]
Finally, \eqref{eq:K-in-S} implies $h_K\leq Ch_S$, and hence
\[
 M^*(K)\leq CM^*(S)\leq C\sqrt n\,\log(n).
\]
\end{proof}

\begin{remark}[The traditional volume-one normalization]
Suppose that $|K|=1$ and
$\Cov(\mu_K)=L_K^2\Id_n$. Applying Theorems~\ref{thm:M} and
\ref{thm:Mstar} to $L_K^{-1}K$ gives
\[
 L_KM(K)\leq C\frac{\log(n)}{\sqrt n}
\]
and
\[
 M^*(K)\leq CL_K\sqrt n\,\log(n).
\]
Thus the second estimate improves the earlier
$CL_K\sqrt n(\log(n))^2$ bound of E.~Milman by one logarithmic factor,
under the slicing input used here.
\end{remark}

\begin{remark}[What the arguments give without slicing]
\label{rem:without-slicing}
We explain more precisely the dependence on the slicing input. Assume
first that \(K\) is in probabilistic isotropic position, and let
\(\mu_K\) be the uniform probability measure on \(K\).

In the proof of the estimate for \(M(K)\), the slicing theorem is used
only for the marginals \(P_F\mu_K\) appearing in the
weighted-covariance argument. Define
\[
    \mathcal L_M(K)
    =
    \sup_{F\subseteq\R^n}
    L\bigl((P_F)_*\mu_K\bigr).
\]
Since \(P_F\mu_K\) is isotropic on \(F\), its density satisfies
\[
    \Pp\left(
        |P_FX|\leq\delta\sqrt r
    \right)
    \leq
    \left(
        C\delta\,
        L\bigl((P_F)_*\mu_K\bigr)
    \right)^r,
    \qquad r=\dim F.
\]
Consequently, in the weighted-covariance lemma one must choose
\[
    \delta
    \simeq
    \frac{1}{\mathcal L_M(K)}.
\]
The lower bound for the relevant weighted covariance is then of order
\(\mathcal L_M(K)^{-2}\), and hence the curvature supplied by the
\(p\)-th summand is of order
\[
    \frac{p}{\mathcal L_M(K)^2}
\]
outside a subspace of codimension \(Cp\). The spectral-counting and
inverse-trace arguments therefore give
\[
    \Tr\bigl[\nabla^2\Phi(u)\bigr]^{-1}
    \leq
    C\mathcal L_M(K)^2\log(n).
\]
After the spherical-Laplacian argument and the comparison between
\(R_K\) and \(K\), this yields
\[
    M(K)
    \leq
    C\mathcal L_M(K)
    \frac{\log(n)}{\sqrt n}.
    \tag{9.1}
\]

The corresponding parameter in the \(M^*(K)\)-argument involves
exponential tilts. For \(\xi\in\R^n\), write
\[
    m_\xi=\E_{\mu_\xi}X,
    \qquad
    A_\xi=\Cov(\mu_\xi),
\]
and let
\[
    \widehat\mu_\xi
    =
    \bigl(A_\xi^{-1/2}\bigr)_*
    \bigl(\mu_\xi(\,\cdot+m_\xi)\bigr)
\]
be the isotropic affine image of \(\mu_\xi\). Define
\[
    \mathcal L_{M^*}(K)
    =
    \sup_{\substack{
        \xi\in\R^n,\ \Lambda(\xi)\leq\eta n\\
        F\subseteq\R^n
    }}
    L\bigl((P_F)_*\widehat\mu_\xi\bigr),
\]
where \(\eta>0\) is the fixed absolute constant used in the dyadic
construction.

Suppose that \(F\) is an \(r\)-dimensional spectral subspace on which
\[
    \Cov(\mu_\xi)|_F
    \succeq
    D\,\Id_F.
\]
After whitening the projected tilted measure, the general small-ball
estimate gives
\[
    (P_F)_*\mu_\xi
    \left(
        \sqrt{2r}\,B_2^F
    \right)
    \leq
    \left(
        \frac{C\mathcal L_{M^*}(K)}{\sqrt D}
    \right)^r.
\]
Choosing
\[
    D
    =
    C\mathcal L_{M^*}(K)^2
\]
and repeating the direct comparison between the original measure and
its exponential tilt, one obtains
\[
    \#\left\{
        i:
        \lambda_i\bigl(\Cov(\mu_\xi)\bigr)
        >
        C\mathcal L_{M^*}(K)^2
    \right\}
    \leq
    2\Lambda(\xi).
    \tag{9.2}
\]
The curvature formula for the Laplace body then gives, outside a
subspace of codimension \(Cp\),
\[
    \nabla^2
    \left(
        \frac12 h_{L_p(K)}^2
    \right)
    \preceq
    Cp\,\mathcal L_{M^*}(K)^2\Id.
\]
After Legendre duality, the corresponding summand in the Hessian of
the aggregate gauge has curvature bounded below by
\[
    \frac{c}{p\,\mathcal L_{M^*}(K)^2}.
\]
Thus the linear eigenvalue growth and the inverse-trace estimate lose
a factor \(\mathcal L_{M^*}(K)^2\). Taking the square root in the final
spherical \(L_2\)-estimate yields
\[
    M^*(K)
    \leq
    C\mathcal L_{M^*}(K)
    \sqrt n\,\log(n).
    \tag{9.3}
\]

Since
\[
    \mathcal L_M(K)\leq L_n,
    \qquad
    \mathcal L_{M^*}(K)\leq L_n,
\]
equations \((9.1)\) and \((9.3)\) imply
\[
    M(K)
    \leq
    C L_n\frac{\log(n)}{\sqrt n},
    \qquad
    M^*(K)
    \leq
    C L_n\sqrt n\,\log(n).
\]

Finally, let \(K\) have volume one and satisfy
\[
    \Cov(\mu_K)=L_K^2\Id_n.
\]
For \(\widetilde K=L_K^{-1}K\), one has
\[
    M(\widetilde K)=L_KM(K),
    \qquad
    M^*(\widetilde K)=\frac{1}{L_K}M^*(K).
\]
Applying the preceding estimates to \(\widetilde K\) gives
\[
    M(K)
    \leq
    C\frac{L_n}{L_K}
    \frac{\log(n)}{\sqrt n},
\]
whereas
\[
    M^*(K)
    \leq
    C L_KL_n\sqrt n\,\log(n).
\]
The different positions of \(L_K\) in these two formulas are entirely
due to homogeneity: the mean gauge has degree \(-1\), while the mean
width has degree \(1\). In particular, these estimates do not provide
an independent proof of slicing; the slicing theorem is precisely what
turns the hereditary quantities
\(\mathcal L_M(K)\) and \(\mathcal L_{M^*}(K)\) into absolute
constants.
\end{remark}

\begin{remark}[The non-symmetric case]
\label{rem:non-symmetric}
The symmetry assumption is not essential for either estimate, although
the two arguments require slightly different modifications. Let
\(K\subset\R^n\) be centered, not necessarily symmetric, and assume
that
\[
    \Cov(\mu_K)=\Id_n.
\]

For the mean-gauge estimate, one replaces the symmetric centroid
bodies by the one-sided centroid bodies
\[
    h_{Z_p^+(K)}(u)
    =
    \left(
        \int_K \ip{x}{u}_+^p\,d\mu_K(x)
    \right)^{1/p},
    \qquad p\geq1,
\]
and defines \(R=R_K\) by
\[
    h_R(u)^2
    =
    |u|^2
    +
    \sum_{p\in\mathcal D_n}
    h_{Z_p^+(K)}(u)^2.
\]
This is the support function of a convex body, although \(R\) need
not be symmetric. Indeed, each \(h_{Z_p^+(K)}\) is non-negative,
positively homogeneous and subadditive, and hence their quadratic
aggregate is again subadditive.

For \(x\in K\),
\[
    \ip{x}{u}_+
    \leq
    h_K(u),
\]
and therefore
\[
    h_{Z_p^+(K)}(u)\leq h_K(u).
\]
Moreover, if \(Y=\ip{X}{u}\), then centeredness gives
\[
    \E Y_+
    =
    \frac12\E|Y|,
\]
whereas the one-dimensional reverse moment inequality yields
\[
    \E|Y|
    \geq
    c\bigl(\E Y^2\bigr)^{1/2}
    =
    c|u|.
\]
Consequently,
\[
    |u|\leq C h_K(u),
\]
and hence, exactly as in the symmetric case,
\[
    R\subseteq C\sqrt{\log(n)}\,K.
\]

The Hessian computation is unchanged, with
\(|\ip{X}{u}|^{p-2}\) replaced by \(\ip{X}{u}_+^{p-2}\). The adjacent
moment estimate needed in the weighted-covariance lemma becomes
\[
    \E\ip{X}{u}_+^{p-2}
    \geq
    c
    \left(
        \E\ip{X}{u}_+^p
    \right)^{(p-2)/p}.
\]
To see this, one conditions the one-dimensional log-concave random
variable \(\ip{X}{u}\) on the positive half-line, applies the
logarithmic Berwald inequality to the resulting log-concave density,
and uses Gr\"unbaum's inequality to bound the mass of the positive
half-line from below. The weighted small-ball argument, the spectral
counting estimate and the Legendre-duality step then proceed without
any further change. Thus
\[
    M(K)
    \leq
    C\frac{\log(n)}{\sqrt n}
\]
also holds without symmetry.

For the mean-width estimate, no one-sided modification has to be
introduced separately. Define, as before,
\[
    \Lambda(\xi)
    =
    \log\int_K e^{\ip{\xi}{x}}\,d\mu_K(x),
    \qquad
    \Lambda_p
    =
    \{\xi\in\R^n:\Lambda(\xi)\leq p\},
\]
and
\[
    L_p(K)=p\Lambda_p^\circ.
\]
The functions \(\Lambda\) and the bodies \(L_p(K)\) need not be even.
Nevertheless, the non-symmetric form of the Laplace--centroid
comparison of Klartag and E.~Milman
\cite[Lemma~2.3]{KlartagMilmanLaplace} gives
\[
    c Z_p(K)
    \subseteq
    L_p(K)
    \subseteq
    C Z_p(K),
    \qquad p\geq1,
\]
where \(Z_p(K)\) is defined using the absolute moments
\[
    h_{Z_p(K)}(u)
    =
    \left(
        \int_K |\ip{x}{u}|^p\,d\mu_K(x)
    \right)^{1/p}.
\]
In particular, using
\[
    K
    \subseteq
    \text{conv}(K,-K)
    \subseteq
    C Z_n(K)
    \subseteq
    C\frac{n}{p}Z_p(K),
\]
we obtain
\[
    K\subseteq C\frac{n}{p}L_p(K),
    \qquad 2\leq p\leq n.
\]

The auxiliary body \(S=S_K\) is consequently defined by the same
formula
\[
    \norm{x}_S^2
    =
    \frac1N
    \left[
        \frac{|x|^2}{n^2}
        +
        \sum_{p\in\mathcal P}
        \left(\frac pn\right)^2
        \norm{x}_{L_p(K)}^2
    \right].
\]
Although the gauges appearing here need not be even, the right-hand
side is still the square of a gauge: the triangle inequality follows
by applying the Euclidean triangle inequality to the vector of the
individual non-negative gauges. The containment argument therefore
still gives
\[
    K\subseteq CS.
\]

The tilted-covariance estimate and the curvature formula for
\[
    \frac12h_{L_p(K)}^2
\]
use only the centeredness and log-concavity of \(\mu_K\), and not its
symmetry. Legendre duality is equally valid for non-symmetric gauges:
for every convex body \(A\) containing the origin in its interior,
\[
    \left(\frac12\norm{\cdot}_A^2\right)^*
    =
    \frac12h_A^2.
\]
Thus the lower spectral estimate for the Hessian of
\(\norm{\cdot}_S^2/2\), the inverse-trace bound and the spherical
Laplacian argument remain unchanged. It follows that
\[
    M^*(K)
    \leq
    C\sqrt n\,\log(n)
\]
for every centered convex body in probabilistic isotropic position,
without any symmetry assumption.

The versions without the slicing input stated in
Remark~\ref{rem:without-slicing} extend in the same way, with the
isotropic constants of the relevant marginals and whitened tilted
marginals retained in the corresponding estimates.
\end{remark}

\end{document}